\newcommand\seq\vdash 
\newcommand\limp{\mathbin{\rightarrow}} 
\newtheorem{prop}{Proposition}
\newcommand\N{\mathbb{N}}
\newcommand\A{\mathcal{A}}
\newcommand\F{\mathcal{F}}
\newcommand\xor{\mathrel{|}}
\newcommand\Hfun{\mathsf{Hfun}}
\newcommand\Hpred{\mathsf{Hpred}}
\begin{document}

%%
%% Rights management information.
%% CC-BY is default license.
\copyrightyear{2024}
\copyrightclause{Copyright for this paper by its authors.
  Use permitted under Creative Commons License Attribution 4.0
  International (CC BY 4.0).}

%%
%% This command is for the conference information
\conference{ARQNL, Nancy,  2024}

%%
%% The "title" command
\title{A proof-theoretical approach to\\ some extensions of first order quantification}
%\tnotemark[1]
%\tnotetext[1]{You can use this document as the template for preparing your  publication. We recommend using the latest version of the ceurart style.}

%%
%% The "author" command and its associated commands are used to define
%% the authors and their affiliations.
\author[1]{Loïc Allègre}[%
email=loic.allegre@lirmm.fr,
]
%\cormark[1]
%\fnmark[1]
\address[1]{LIRMM (Univ Montpellier \& CNRS), Montpellier, France}
\author[2]{Ophélie Lacroix}[%
email=ophelie.lacroix@resolve.tech, 
url=https://sites.google.com/site/ophelielacroixnlp/,
]
\address[2]{Resolve, Copenhagen, Danemark}
\author[1]{Christian Retoré}[%
orcid=0000-0002-2401-9158,
email=christian.retore@lirmm.fr,
url=http://www.lirmm.fr/~retore,
]
%\fnmark[1]

%% Footnotes
%\cortext[1]{Corresponding author.}
%\fntext[1]{These authors contributed equally.}

%%
%% The abstract is a short summary of the work to be presented in the
%% article.
\begin{abstract}
Generalised quantifiers, which include Henkin's branching quantifiers, have been introduced by Mostowski and Lindström and developed as a substantial topic application of logic, especially model theory, to linguistics with work by Barwise, Cooper, Keenan.   

In this paper, we mainly study the proof theory  of some non-standard quantifiers as second order formulae . Our first example is the usual pair of first order quantifiers (for all / there exists) when individuals are viewed as individual concepts handled by second order deductive rules. Our second example is the study of a second order translation  of the simplest branching quantifier: ``A member of each team and a member of each board of directors know each other", for which we propose a second order treatment.  
\end{abstract}

%%
%% Keywords. The author(s) should pick words that accurately describe
%% the work being presented. Separate the keywords with commas.
\begin{keywords}
  proof theory \sep
    second order logic \sep 
generalised quantifiers \sep
branching quantifiers \sep 
individual concepts\sep 
\end{keywords}

%%
%% This command processes the author and affiliation and title
%% information and builds the first part of the formatted document.

\maketitle

\section{Generalisation of usual first order quantification}

Common first order quantifiers $\exists$ and $\forall$ have been formalised the way they are in standard mathematics by Frege \cite{Frege:1879} whose logical and philosophical view  matches Hilbert desiderata regarding logical foundations of mathematics.\cite{KK86,Goldfarb1979,HBvol1,HBvol2} 

By that time, mathematicians and logicians were making little  difference  between the interpretations of quantifiers in \emph{the} standard model and their proof rules --- before the work of Skolem or Gödel  mathematicians and logicians made little distinction 
between syntax and semantics, they worked with an interpreted language, see e.g. \cite{KK86} —— perhaps Hilbert was more demanding regarding quantifiers because he focused on foundations of mathematics \cite{Goldfarb1979}.

Extension of usual quantification have mainly been considered for modelling faithfully quantification modes that one founds in ordinary language, like numbers (``three students came to the party"), ``most" (``most students came to the party"),  percentage (``a third of the students came to the party"), vague quantifiers (``few students came to the party") etc. 

This gave rise to the theory of generalised quantifiers, initially intended for model theory \cite{Mostowski1957,Lindstrom1966} that was intensively developed in connection with linguistics.\cite{BarwiseCooper1981,Keenan1986,sep-generalized-quantifiers}.  In such a setting, the lexical item expressing a quantifier (say ``most A are B") is viewed as a function with two arguments that are predicates. This fits in well with the logico-functional view known as Montague semantics\cite{montague74b}.
Most (!) generalised quantifiers can be viewed as a second order construction. For instance the interpretation of a generalised quantifier depending on two unary predicates like ``most" is interpreted as a second order construction, i.e. is true whenever the pairs of unary predicates it is applied to is a ``legal" pair of subsets of the domain, namely a pair of sets $(A, B)$ such that $|A\cap B|>|A\setminus B|$.

The paper is organised as follows. We first provide a reminder on second order logic, because this topic is not so common. Next, we present a second order view of usual first order quantification as second order quantification over individual concepts --- and show the two formulations are proved to be equivalent provided the individual concepts are standard (i.e. they may not be empty, as opposed to some Kripke and Muskens variants). Then, after a quick presentation of generalised quantifiers,  we study  the simplest branching quantifier as a second order construction, and we propose direct rules for this quantifier.

\section{A reminder on second order logic}
\label{2ndOrder} 

We briefly remind the reader with basic facts about second order logic,  following \cite[Chapter 5]{vanDalen2013}, and one may also refer to the survey \cite{sep-logic-higher-order}.  

\subsection{Language} A second-order language is based on a first-order language (the first three items in the list below), and extended with an infinitely enumerable set of predicate variables, each of them endowed with an arity (the last item in this list).
\begin{itemize} 
\item an infinite enumerable set of first order (a.k.a. individual) variables $x_i$, with $i$ in $\N$
\item a set of constants $c_i$, with $i$ in $I$ ($I$ is often enumerable, but this is not required);\footnote{The first order language may also include an enumerable set of first order functions, but an $n$-ary function $g$ in the language can replaced with an $n+1$-ary predicate $G$ with an axiom that $F$ is a function, and at second order there is a formula $F[X]$ saying that the $n+1$-ary predicate $X$ corresponds to an $n$-ary function.}
\item an enumerable (or finite) set of predicate constants $P_i^n$ with $i$ in $\N$ each of them with an arity $n$ (as in a first order language) -- a predicate constant with arity $0$ is a proposition; 
\item an enumerable set of predicate variables $X_i^n$ $i$ in $\N$ each of them with an arity $n$ -- a predicate variable  with arity $0$ is a propositional variable. 
\end{itemize} 

Second order formulae  are defined ``as expected": an atomic formula is $Z^n(u_1,\ldots,u_n)$
with $Z$ a predicate constant or a predicate variable of arity $n$ and the $u_i$ being $n$ first order terms (here first order variables or first order constants since we have no functions). Les us call $\A$ the set of atomic formulae . Then the set of second order formulae  $\F$ is defined by 

\begin{center} 
$\F\ {=::}\ \A \xor \lnot \F \xor \F \land \F \xor \F  \lor \F \xor \F \limp \F 
\xor \forall x \F 
\xor \exists x \F 
\xor \forall^n X_i^n  \F 
\xor \exists^n X_i^n  \F$

\it 
where $x$ stands for an individual variable while $X_i^n$ stands for a predicate variable of arity $n$. 
\end{center} 

The formula  $A \Leftrightarrow B$ is just a short-hand for $(A \limp B) \land (B \limp A)$. 

Although we shall not always write the $^n$ superscript in $\forall^n$  and $\exists^n$ beware that there are different pairs of second order quantifiers ($\exists^n/\forall^n$), one pair for each arity. The occurrences of the variable $x_i$ or $X_i^n$ are bound by the closest $\forall x, \exists x, \forall^n X_i^n, \exists^n X_i^n$ --- if any --- above them in the formula tree. 

\subsection{Proof rules in natural deduction} \label{proofrules} 

We use natural deduction with standard rules as can be found in \cite{vanDalen2013}. As we limit ourselves to classical logic, an extra principle is needed: \textsl{tertium non datur} ($A\lor\lnot A$ for all $A$) or \textsl{reductio ad absurdum} (from a deduction with conclusion $\bot$ under hypothesis $\lnot A$, conclude $A$), see e.g. \cite{MootRetore2016classical}

The proof rules for second order quantifiers, namely the introduction and elimination rules of $\forall^n$ and $\exists^n$ are as expected, they are similar to the rules for first order quantifiers, \textsl{mutatis mutandi}: 

$$
\begin{array}{ccc}  
\begin{prooftree} 
\[
\leadsto 
\forall^n X_i^n\ T[X_i^n] 
\] 
\justifies 
T[X_{i,k}^n(t^1_k,\ldots,t^n_k):=\phi^n(t^1_k,\ldots,t^n_k)]
\using (\forall^n)_E
\end{prooftree}
&&
\begin{prooftree} 
\[
\leadsto 
T[X_i^n]
\] 
\justifies 
\forall^n X_i^n\ T[X_i^n] 
\using (\forall^n)_I
\end{prooftree} 
\\[5em]  
\begin{prooftree} 
\exists^n X_i^n\ T[X_i^n] 
\[ 
\bigg[T[X_i^n]\bigg]^k 
\leadsto 
\psi 
\]
\justifies 
\psi 
\using (\exists^n)_E^k
\end{prooftree} 
&& 
\begin{prooftree} 
\[
\leadsto 
T[X_{i,k}^n(t^1_k,\ldots,t^n_k):=\phi^n(t^1_k,\ldots,t^n_k)]
\] 
\justifies 
\exists^n X_i^n T[X_i^n]
\using (\exists^n)_I
\end{prooftree} 
\end{array}
$$

where 
\begin{enumerate}
\item 
$T[X_i^n]$ stands for a formula in which the predicate variable $X_i^n$ may occur (but that is not mandatory, as for first order quantification). 
\item 
There should be no free occurrence of $X_i^n$ in the hypotheses of the introduction rule $(\forall^n X_i^n)_I$ nor in the elimination rule $(\exists X_i^n)_E$ --- as in the first order $\forall_I$ and $\exists_E$ introduction rules. 
\item 
The obscure notation\footnote{This point if often under explained in the literature.} $T[X_{i,k}^n(t^1_k,\ldots,t^n_k):=\phi^n(t^1_k,\ldots,t^n_k)]$ requires some explanation. This formula  
stands for the formula obtained by replacing 
\begin{itemize} 
\item 
the $k^{\mathrm{th}}$ occurrence $X_{i,k}^n$ of $X_i^n$ which is applied to $n$ terms $(t^1_k,\ldots,t^n_k)$ 
\item with a formula with $n$ free variables applied to the very same terms $(t^1_k,\ldots,t^n_k)$ 
\end{itemize}
with the requirement that no originally free variable in $(t^1_k,\ldots,t^n_k)$ becomes bound after the application of $\phi$ to $(t^1_k,\ldots,t^n_k)$. 
\newline\indent Here is an example: let $\phi(x,y)=P(x,y)\land Q(y,a)$, let $T[X_1^2]=X_{1,1}^2(z,a)\land X_{1,2}^2(a,b)$ -- mind the second subscript of $X_{1,\bullet}^2$ which indicates the occurrence number (there are two occurrences of the predicate variable $X_1^2$ in $T[X_1^2]$. Then $T[X_{1,k}^2:=\phi(t^1_k,\ldots,t^n_k)]$ is 
$(\phi(x,y)=P(x,y)\land Q(y,a))[x:=z;y:=a] \land (\phi(x,y)=P(x,y)\land Q(y,a))[x:=a;y:=b]$
that is $(P(z,a)\land Q(a,a))\land (P(a,b)\land Q(b,a))$. 
From the definition and the example, it is unsurprising that second order unification,  is undecidable \cite{Goldfarb1981}. 
\item 
In the rule $(\exists^n)_E$ the expression $\bigg[T[X_i^n]\bigg]_k$ indicates that the hypothesis 
$T[X_i^n]$ has been cancelled during the $(\exists^n)_E$ number $k$. 
\end{enumerate}

Some remarks: 

\begin{enumerate} 
\item 
    this proof system can derive the comprehension axiom: 
$$\exists X^n \forall x_1 \ldots x_n\left[\varphi\left(x_1, \ldots, x_n\right) \leftrightarrow X^n\left(x_1, \ldots, x_n\right)\right]$$ 
\item 
equality can be defined \`a  la Leibnitz: 
$x=y : \forall^1 X^1 X^1(x) \limp X^1(y)$ (because of negation there is no need to use $\Leftrightarrow$ in this definition)
\item \label{Ex} being equal to $x$ is a property $E_x(y):\forall^1 X^1(x)\limp X^1(y)$. 
\item 
the Dedekind finiteness,  ``any injective function is surjective" is definable: 

$\forall^2 X^2\quad 
((\forall x \forall y \forall z 
(X(x,y) \land X(x,z) \limp y=z))
\land 
(\forall x \forall y \forall z 
(X(y,x)\land X(z,x) \limp y=z)))$ 

\hspace*{2cm}
$  \limp  
(\forall w \exists u\  X(u,w))$
\end{enumerate} 

Finally, using implication $\limp$, first order $\forall$, and propositional second order $\forall^0$ one can define false, $\bot$, negation $\lnot$, the propositional connective $\land$, $\lor$, first order existential quantification $\exists$.  Adding  $\forall^n$  to $\limp$, $\forall$, one can also define $\exists^n$. 
Thus, the expressive power of second order propositional quantifier $\forall^0$ is impressive. 

\subsection{Standard and non-standard models, completeness} 

We here follow \cite{Henkin1950,sep-logic-higher-order,vanDalen2013}. 

A second order model consists in a first order model, i.e. with a  domain $D$, endowed with a set of sets of tuples of length $n$ for each $n\in\N$ in order to interpret predicate variables of arity $n$: they may vary in a fixed subset $A^n$ of $\mathcal{P}(D^n)$ which is not necessarily the full powerset $\mathcal{P}(D^n)$; for this structure to define a model, it must enjoy 
the comprehension axiom scheme  $\exists^n X^n \forall x_1\cdots \forall  x_n 
[\phi(x_1,\ldots,x_n) \leftrightarrow X^n(x_1,\ldots,x_n)]
$ where the $n$-ary predicate variable $X^n$ 
does not appear in $\phi$ --- in other words 
the subsets of $A^n$ must include the interpretations of the formulae  with $n$ free variables. The comprehension axiom scheme is derivable from the existential introduction rule given above. 

When for any arity $n$ this subset $A^n$ is $\mathcal{P}(D^n)$ the model is said to be \emph{standard} (or full). Standard  models, which satisfy the comprehension scheme, match the easiest intuition: a predicate variable of arity $n$ varies in all possible subsets of $D^n$. As second order logic can express the finiteness of the model, 
completeness and compactness do not hold with standard models. 

Second order logic may be encoded in first order logic: predicate variables $X_i^n$ are viewed as individual constants interpreted in a domain (that also contains standard individuals), and some additional predicate constants of arity $n+1$ are needed to mimic the application of  a predicate variable of arity $n$  to $n$ terms. Then a second order formula is provable within second order logic whenever its first order translation is provable in first order logic. So applying first order completeness theorem one gets that a second order formula is provable in second order logic if and only if it is true in all second order models (including the non standard ones).  As a consequence of completeness, compactness holds, so one can have a model with at least $n$ elements for each integer $n$, which is  Dedekind finite.

\section[A second order view of first order quantification: quantifying over individual concepts]{A second order view of first order quantification:\\ quantifying over individual concepts} \label{individualconcepts}

\subsection{Individual concepts}
Individual concepts view an individual as a formula $\phi[x]$ with a single free variable $x$  such that there is a single individual satisfying the formula $\phi[x]$.\footnote{This notion of concept is somehow related to concepts in description logics, and the individual concepts that we use correspond to individual names cf. e.g. \cite{Rudolph2011DL}.}
This can be said in second order logic: a formula $\phi[x]$ with a single free variable $x$ is said to be an individual concept whenever there is at most one individual $x$ such that  $\phi[x]$ and at least one $x$ such that $\phi[x]$ --- so it makes exactly one $x$ such that $phi[x]$. That the concept $\phi$ is an \emph{individual concept} can actually be expressed in second order logic:  
\centerline{$C(\phi) : (\forall x \forall y \phi(x) \land \phi(y) \limp x=y) \land \exists z \phi(z) $} 

Individual concepts are close to Montague semantics and Leibnitz identity
(where an individual is identified with the set of all properties it enjoys) \cite{Kripke1971,Kripke1972,montague74b}.
For reasons like possible worlds semantics, see e.g. the discussion in \cite[chapter 4]{Fitting1998foml}, some logicians consider a variant of individual concepts that are possibly empty.   This may look strange but if you think individual concepts are some kind of proper name that are part of the logical language, it is hard to tell what a proper name refers to before the reference actually exists. For instance, the individual concept $\mathrm{Gödel}(x)$ has no reference in Egyptian times. Hence Kripke in the 70s dropped the existence condition from individual concepts (see \cite{Muskens2012} for a recent account of those ideas), thus obtaining a formula with a lower logical complexity profile: 
\centerline{$C(\phi) : (\forall x \forall y \phi(x) \land \phi(y) \limp x=y)$}

\subsection[First order universal quantification as universal quantification over non-empty individual concepts]{First order universal quantification as universal quantification over non-empty individual concepts}

The simplest quantifiers one can try to view as a second order construction are clearly the usual first order quantifiers $\forall,\exists$. So let us compare the second order quantification over individual concepts to usual quantification. 

\begin{prop} \label{ICQuniv} First order universal quantification and second order quantification over individual concepts are equivalent: 
\begin{enumerate} 
\item given a property $\varphi(X)$ of individual concepts,  the following equivalence holds: $\forall x \varphi^{\downarrow}(x) \Leftrightarrow \forall X(C(X) \limp  \varphi(X))$ where $\varphi^{\downarrow}(x):=\exists X(C(X) \wedge$ $X(x) \wedge \varphi(X))$.
\item given a property  $\psi(x)$ of individuals, the following equivalence holds:  $\forall x \psi(x) \Leftrightarrow \forall X\left(C(X) \limp  \psi^{\uparrow}(X)\right)$ where  $\psi^{\uparrow}(X):=\exists x(X(x) \wedge \psi(x))$. 
\end{enumerate} 
\end{prop}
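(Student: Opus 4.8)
The plan is to prove the two equivalences separately, and within each to prove the two implications in turn. The whole argument rests on two ingredients already available in the excerpt: the canonical individual concept $E_a$ attached to every individual $a$ (cf.\ item~\ref{Ex} in Section~\ref{proofrules}), for which $C(E_a)$ and $E_a(a)$ are derivable --- reflexivity of Leibniz equality gives $E_a(a)$ and hence the existential conjunct of $C(E_a)$, while substitutivity of $=$ gives its uniqueness conjunct --- and the uniqueness clause of $C$ itself, which says that an individual concept holds of at most one point. One can run this directly in natural deduction, but since each equivalence is a closed second-order formula it is equivalent, by the completeness theorem recalled in Section~\ref{2ndOrder}, to checking validity in every (possibly non-standard) second-order model; the semantic route has the pleasant side effect that extensionally equivalent predicates receive the very same interpretation, which trivialises the one delicate step discussed below.

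Equivalence~(2) is the lighter one. For the right-to-left direction, assume $\forall X(C(X)\limp\psi^{\uparrow}(X))$ and fix an arbitrary $a$; instantiating the hypothesis at $X:=E_a$ and discharging $C(E_a)$ yields $\psi^{\uparrow}(E_a)$, i.e.\ $\exists x(E_a(x)\wedge\psi(x))$, and since $E_a(x)$ unfolds to the Leibniz identity $a=x$, substitutivity turns $\psi(x)$ into $\psi(a)$; as $a$ was arbitrary we obtain $\forall x\,\psi(x)$. For the left-to-right direction, assume $\forall x\,\psi(x)$ and take any $X$ with $C(X)$; the existential conjunct of $C(X)$ supplies a $z$ with $X(z)$, and $\psi(z)$ holds by hypothesis, so $X(z)\wedge\psi(z)$ witnesses $\psi^{\uparrow}(X)=\exists x(X(x)\wedge\psi(x))$.

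Equivalence~(1) follows the same template, but is more subtle because $\varphi$ is a predicate of concepts rather than of individuals. For $\forall X(C(X)\limp\varphi(X))\Rightarrow\forall x\,\varphi^{\downarrow}(x)$: given $a$, instantiate the hypothesis at $E_a$ and discharge $C(E_a)$ to obtain $\varphi(E_a)$; then $C(E_a)\wedge E_a(a)\wedge\varphi(E_a)$ is exactly a witness for $\varphi^{\downarrow}(a)=\exists X(C(X)\wedge X(a)\wedge\varphi(X))$. Conversely, assume $\forall x\,\varphi^{\downarrow}(x)$ and let $X$ satisfy $C(X)$; its existential conjunct gives a $z$ with $X(z)$, and applying $\varphi^{\downarrow}$ at $z$ and eliminating the existential produces a concept $Y$ with $C(Y)$, $Y(z)$ and $\varphi(Y)$. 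From $C(X)$, $X(z)$, $C(Y)$, $Y(z)$ together with substitutivity of $=$ one derives $\forall w(X(w)\Leftrightarrow Y(w))$ --- both $X$ and $Y$ collapse to the single point $z$ --- and transferring $\varphi$ along this extensional equivalence yields $\varphi(X)$, which is what was wanted.

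The step I expect to be the main obstacle is precisely this last transfer, namely passing from $\varphi(Y)\wedge\forall w(X(w)\Leftrightarrow Y(w))$ to $\varphi(X)$: this is an instance of extensionality for unary predicates, which is not derivable from the bare second-order rules of Section~\ref{proofrules} and must be either postulated or --- more cheaply --- obtained for free by reading the equivalence semantically, since in any model $X$ and $Y$ then denote the same subset of the domain and no formula can separate them. A secondary, purely bookkeeping matter is to check, when instantiating $\forall X$ at $E_a$ and when performing the Leibniz substitutions, that the variable-capture side conditions attached to the $(\forall^n)_E$ and $(\exists^n)_E$ rules are respected; this is routine. Finally, observe that every forward direction uses the existential conjunct of $C$ to produce the witness $z$, so the argument genuinely needs the non-empty version of the individual-concept predicate --- which is why the equivalence holds for \emph{non-empty} individual concepts and not for the Kripke variant.
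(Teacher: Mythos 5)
Your four derivations follow essentially the same route as the paper: you build the canonical concept $E_a$ and its proof of $C(E_a)$ (the paper's $\delta$), instantiate the second-order universals at $E_a$ for the right-to-left directions, and use the existence conjunct of $C(X)$ to produce the witness $z$ in the left-to-right directions; parts 1(b), 2(a), 2(b) are exactly what the paper's proof trees do, and your closing observation about where non-emptiness is used matches the paper's later discussion of possibly empty concepts.

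The one place you part company with the paper is the step you flag as the ``main obstacle'' in 1(a), and there your diagnosis is wrong rather than the step being problematic. Passing from $\varphi(Y)$ and $\forall w(X(w)\Leftrightarrow Y(w))$ to $\varphi(X)$ does \emph{not} require an extensionality postulate, nor a detour through models: in second-order syntax a unary predicate variable can only occur applied to first-order terms, so a routine induction on the structure of $\varphi$ shows that each instance of the replacement scheme $\forall w(X(w)\Leftrightarrow Y(w))\limp(\varphi(Y)\limp\varphi(X))$ is derivable from the bare rules of Section~\ref{proofrules} (the atomic case is just $(\forall)_E$ at the relevant term; the quantifier cases are unproblematic because the hypothesis has no bound variables of $\varphi$ free). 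The derivation is of course not uniform in $\varphi$ --- and indeed it cannot be, since an ``intensional'' reading of $\varphi$ would refute the implication --- but that is harmless here, because the Proposition is itself a scheme parametrised by $\varphi$. Also note that symmetry of Leibniz equality, which you invoke in 2(b), is derivable in the same way, or avoidable by instantiating the uniqueness clause of $C$ with its two arguments in the appropriate order. So you should simply finish 1(a) syntactically: derive the coextensionality of $X$ and $Y$ from $C(X)$, $X(z)$, $C(Y)$, $Y(z)$ as you do, then apply the (derivable) replacement instance for the given $\varphi$. Of your two proposed escapes, ``postulate extensionality'' would genuinely weaken the result relative to the paper, which works in the unaugmented system; the semantic route via the Henkin-style completeness theorem recalled in Section~\ref{2ndOrder} is sound, but it buys at the price of heavy machinery something the paper obtains by exhibiting explicit natural deduction trees --- which is the very point of its proof-theoretical approach.
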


\begin{proof}
Let us first observe that there is a simple formal proof without assumption that $C(E_x)$ i.e. that ``being equal to $x$" (cf. section \ref{2ndOrder} item \ref{Ex}) is an individual concept, $E_x(y): y=x : \forall^1 X^1\ (X^1(x)\limp X^1(y))$,  and let us call this proof $\delta$, because we are going to use it several times:

\begin{center} 
$\delta:$\hspace{2cm}\ 
\nopagebreak 

\includegraphics[scale=0.3]{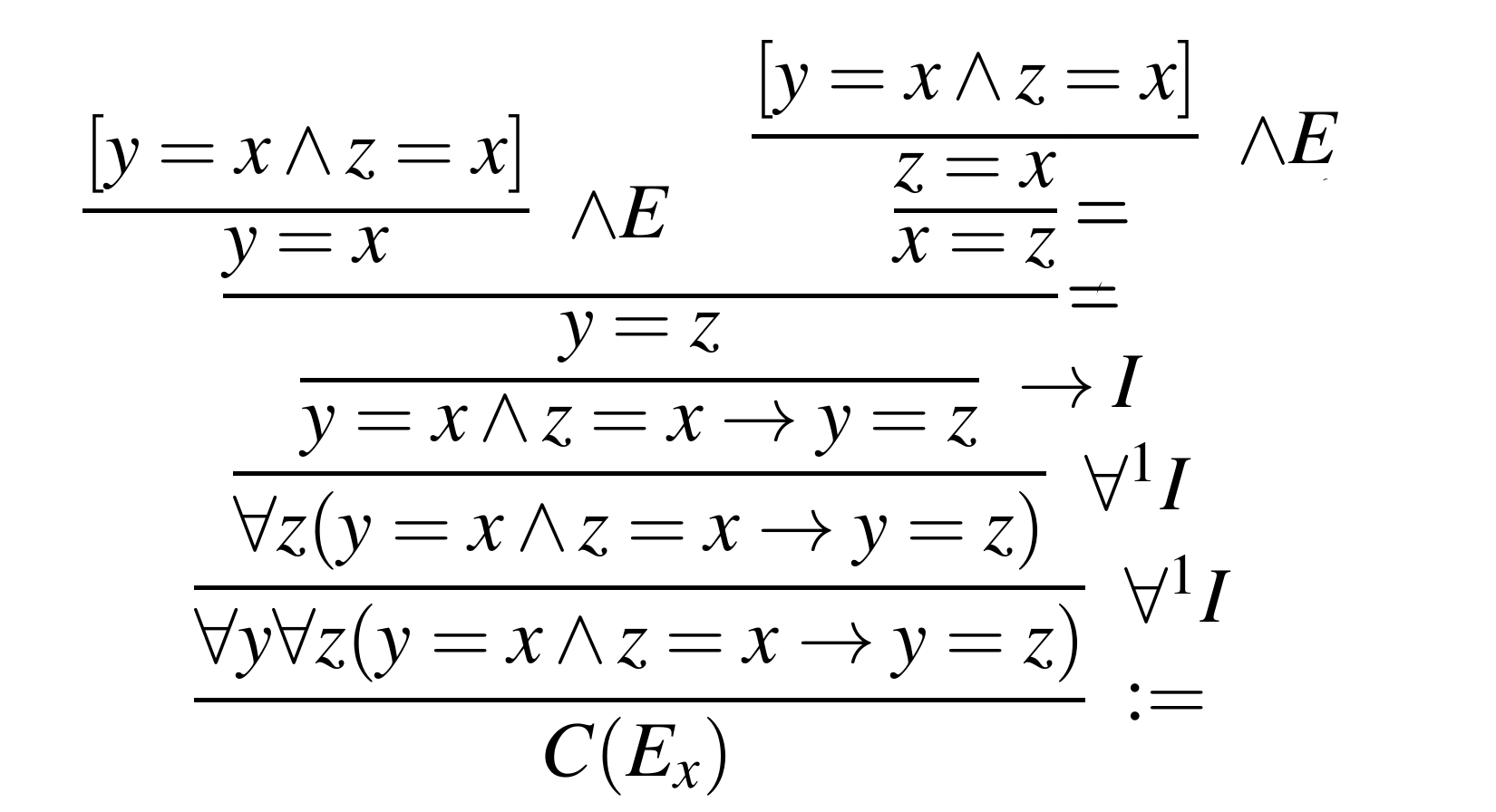}
\end{center} 

\begin{enumerate} 
\item 
\begin{enumerate} 
\item Assuming $\forall x \varphi^{\downarrow}(x)$ one can prove $\forall X(C(X) \limp  \varphi(X))$
\begin{center}
\includegraphics[scale=1.1]{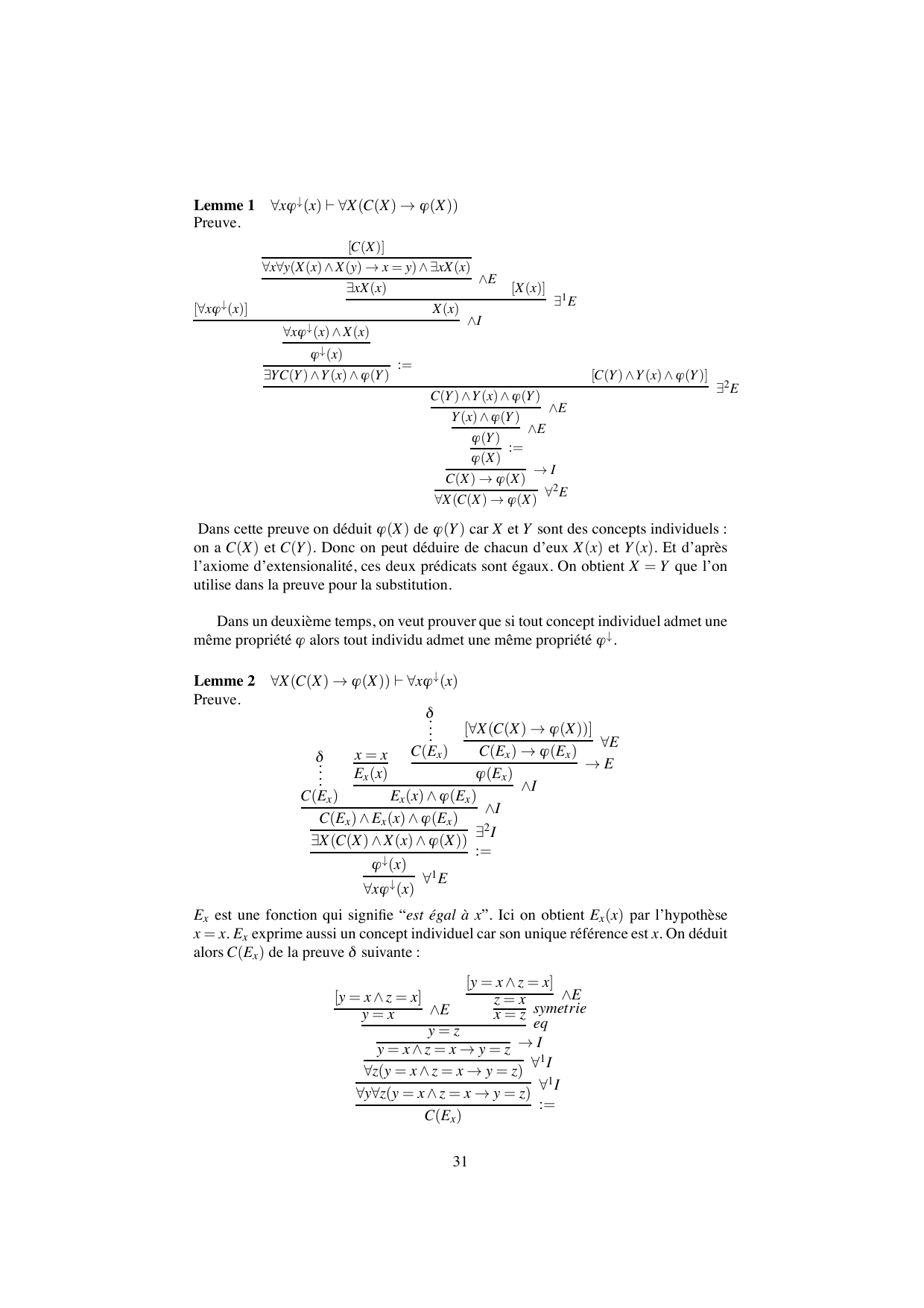} 
\end{center} 
\item Assuming 
$\forall X(C(X) \limp  \varphi(X))$ one can prove $\forall x \varphi^{\downarrow}(x)$. In the proof below, we use $\delta$ the proof that $E_x(\_)=``\_=x"$ is an individual concept. 
\begin{center}
\includegraphics[scale=1.3]{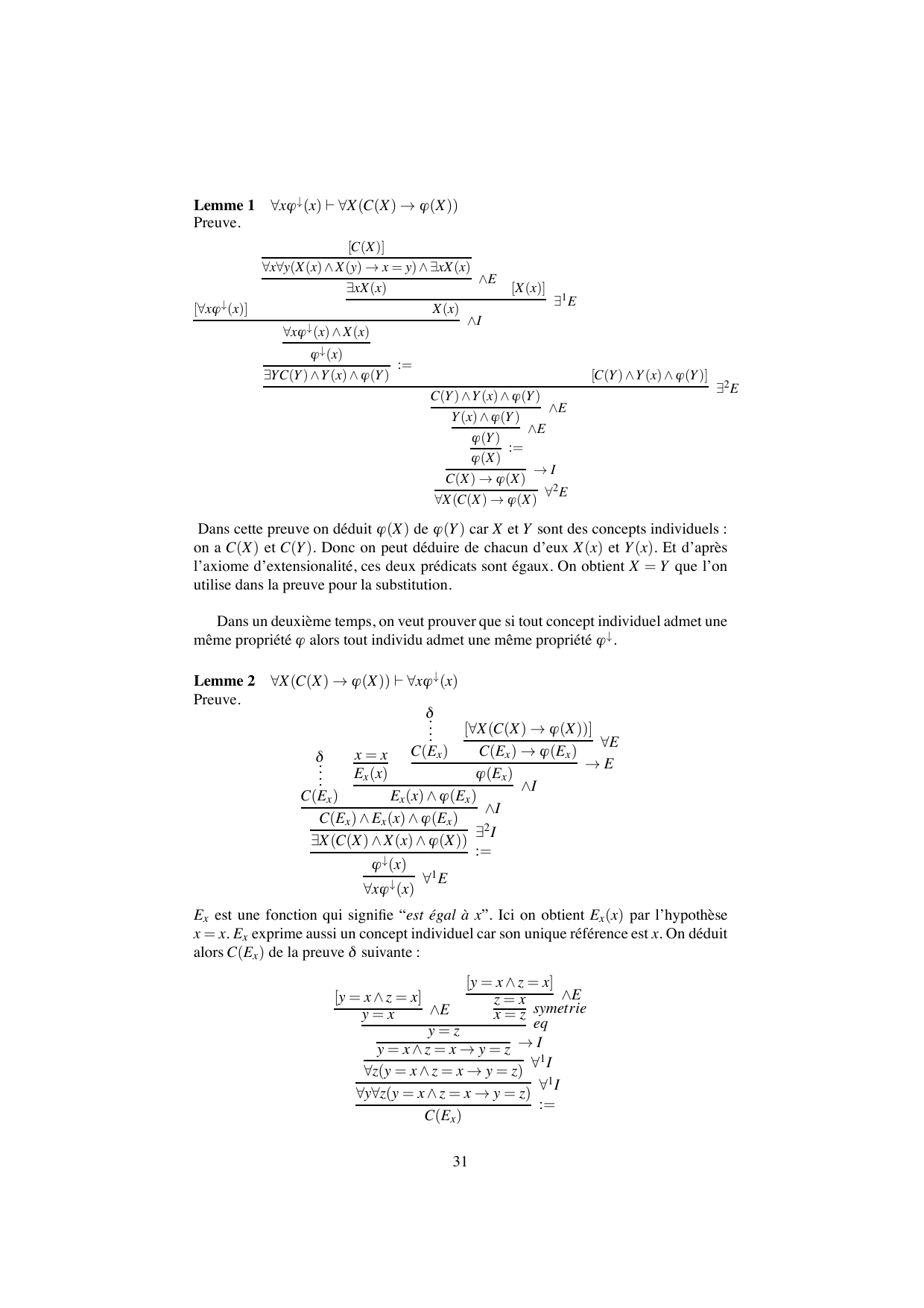} 
\end{center} 
\end{enumerate}
\item 
\begin{enumerate} 
\item Assuming $\forall x \psi(x)$ one can prove $\forall X\left(C(X) \limp  \psi^{\uparrow}(X)\right)$
\begin{center}
\includegraphics[scale=1.14]{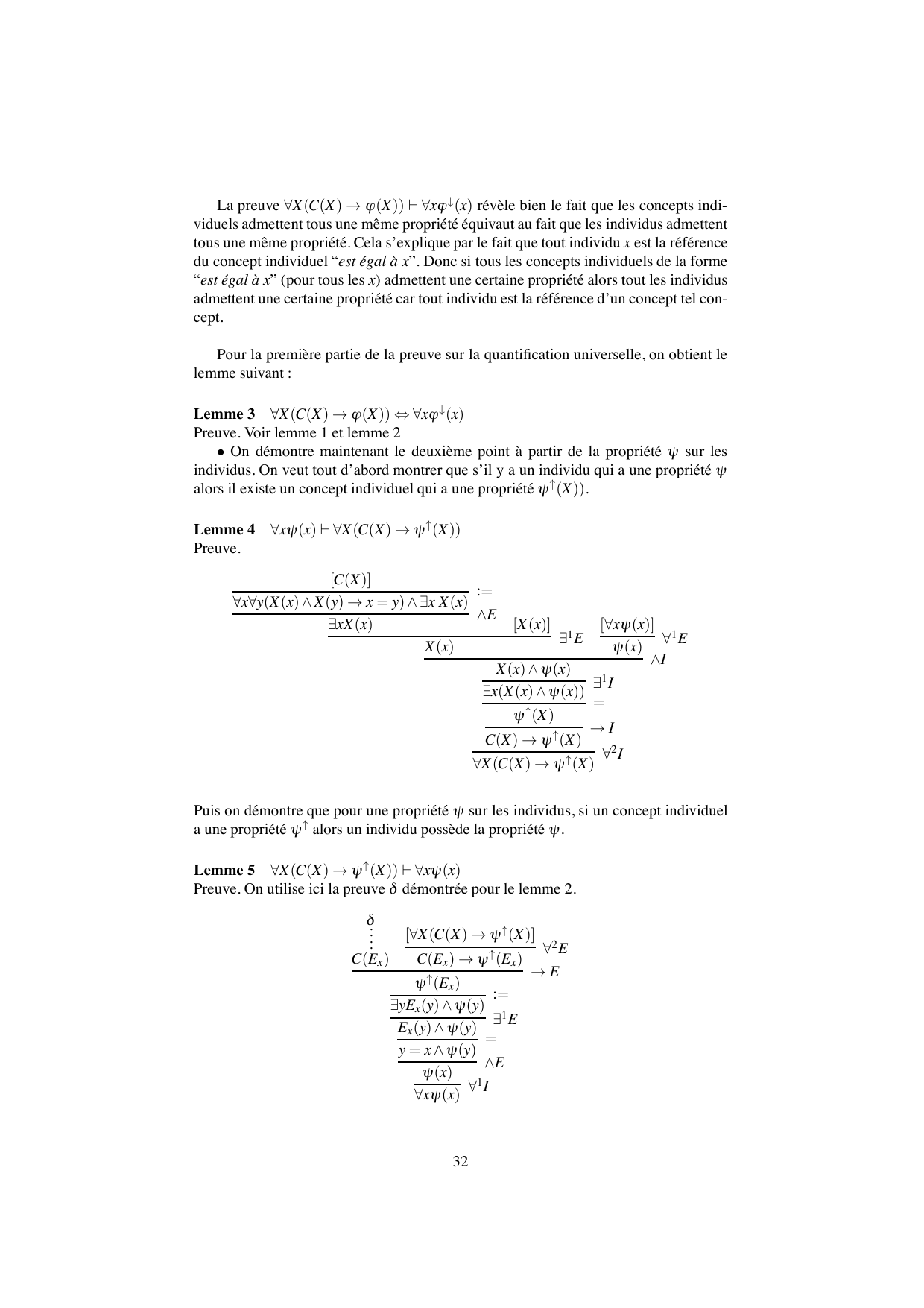} 
\end{center} 
\item Finally assuming $\forall X\left(C(X) \limp  \psi^{\uparrow}(X)\right)$
one can prove: $\forall x \psi(x)$ \nopagebreak 
\begin{center}
\includegraphics[scale=1.4]{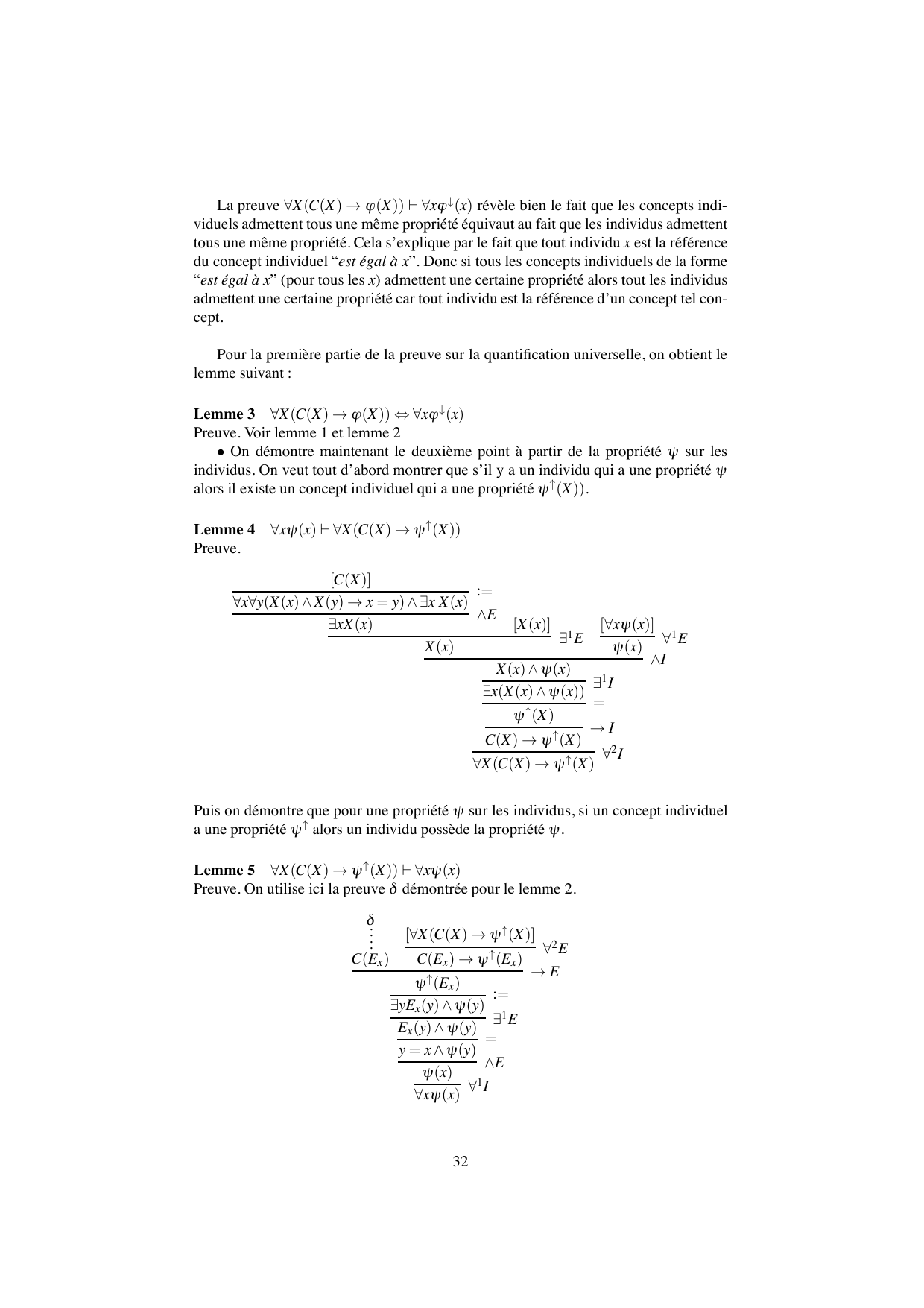} 
\end{center} 
\end{enumerate}
\end{enumerate}
\end{proof}

\subsection[First order existential quantification as existential quantification over non-empty individual concepts]{First order existential quantification as existential quantification over non-empty individual concepts}

As for the universal quantification, we have:

\begin{prop} \label{ICQex}
First order existential quantification and second order quantification over individual concepts are equivalent: 
\begin{enumerate} 
\item when $\phi$ is a property of  individual concepts, one has 
$\exists x \varphi^{\downarrow}(x) \Leftrightarrow \exists X(C(X) \wedge \varphi(X))$ 
where $\varphi^{\downarrow}(x):=\exists X(C(X) \wedge$ $X(x) \wedge \varphi(X))$.
\item  when $\psi$ is a property of  individuals, one has 
$\exists x \psi(x) \Leftrightarrow \exists X\left(C(X) \wedge \psi^{\uparrow}(X)\right)$ where  $\psi^{\uparrow}(X):=\exists x(X(x) \wedge \psi(x))$
\end{enumerate}
\end{prop}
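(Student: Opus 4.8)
The plan is to prove each of the two biconditionals by exhibiting the corresponding pair of natural-deduction derivations, exactly in the style of the proof of Proposition~\ref{ICQuniv}, and to reuse the derivation $\delta$ witnessing $C(E_x)$ for the concept $E_x(y): y=x$. As in the universal case, the work splits into four routine derivations; unlike the universal case, the uniqueness conjunct of $C$ will play essentially no role, which makes the existential version the easier of the two.

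For item~1, the left-to-right direction is almost immediate. From $\exists x\,\varphi^{\downarrow}(x)$ one applies $\exists_E$ to obtain a witness $x$ with $\varphi^{\downarrow}(x)$, i.e.\ $\exists X(C(X)\wedge X(x)\wedge\varphi(X))$; applying $(\exists^1)_E$ yields a predicate $X$ with $C(X)\wedge X(x)\wedge\varphi(X)$, from which $C(X)\wedge\varphi(X)$ follows by $\wedge_E$ and $\wedge_I$ (dropping the middle conjunct $X(x)$), and then $(\exists^1)_I$ gives $\exists X(C(X)\wedge\varphi(X))$; the eigenvariables $x$ and $X$ do not occur in this conclusion, so the side-conditions are met. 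For the converse, from $\exists X(C(X)\wedge\varphi(X))$ one takes such an $X$ by $(\exists^1)_E$, extracts the conjunct $\exists z\,X(z)$ of $C(X)$, applies $\exists_E$ to get an individual $z$ with $X(z)$, reassembles $C(X)\wedge X(z)\wedge\varphi(X)$ by $\wedge_I$, applies $(\exists^1)_I$ to obtain $\varphi^{\downarrow}(z)$, and concludes $\exists x\,\varphi^{\downarrow}(x)$ by $\exists_I$; here $z$ is fresh and absent from the conclusion. This is the only place in the whole proposition where a conjunct of $C$ is actually used.

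For item~2, the left-to-right direction uses $\delta$: from $\exists x\,\psi(x)$ take a witness $x$ by $\exists_E$, invoke $\delta$ to derive $C(E_x)$, observe that $E_x(x)$ unfolds to the trivially provable $\forall^1 X^1(X^1(x)\limp X^1(x))$, so that $E_x(x)\wedge\psi(x)$ holds and $\psi^{\uparrow}(E_x)=\exists y(E_x(y)\wedge\psi(y))$ follows by $\exists_I$ with witness $x$; then $C(E_x)\wedge\psi^{\uparrow}(E_x)$ and $(\exists^1)_I$ give $\exists X(C(X)\wedge\psi^{\uparrow}(X))$. The right-to-left direction does not even need $C$: from $\exists X(C(X)\wedge\psi^{\uparrow}(X))$ take such an $X$ by $(\exists^1)_E$, unfold $\psi^{\uparrow}(X)$ to $\exists y(X(y)\wedge\psi(y))$, apply $\exists_E$ to get $y$ with $X(y)\wedge\psi(y)$, keep $\psi(y)$ by $\wedge_E$, and conclude $\exists x\,\psi(x)$ by $\exists_I$.

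I expect no genuine obstacle here: the only points needing care are the usual freshness side-conditions for the eigenvariables introduced by $\exists_E$ and $(\exists^1)_E$ — all checked above — and the elementary remark that $E_x(x)$ is derivable, both handled exactly as in the proof of Proposition~\ref{ICQuniv}. One could alternatively derive each biconditional of this proposition from the corresponding one of Proposition~\ref{ICQuniv} by a duality argument, but the four direct derivations sketched above are shorter and self-contained.
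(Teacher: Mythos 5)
Your proposal is correct and follows essentially the same route as the paper: four direct natural-deduction derivations, reusing the derivation $\delta$ of $C(E_x)$ for the direction $\exists x\,\psi(x)\Rightarrow\exists X(C(X)\wedge\psi^{\uparrow}(X))$, and exploiting the non-emptiness conjunct of $C(X)$ only where the paper does (the paper merely packages that direction into the auxiliary sub-derivations $\alpha,\beta,\gamma$ before combining them, which is a presentational rather than a substantive difference).
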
 

%This can be achieved more or less as we did for comparing universal quantification over individual concepts and universal quantification over individuals \textsl{mutatis mutandi}. This is unsurprising because we are using a classical system where both  first order $\exists$ and second order $\exists^1$ can be defined from first order $\forall$ and second order $\forall^1$. 

\begin{proof}
At point 2.a) we will also use the proof $\delta$ of $C(E_x)$ from the proof of proposition \ref{ICQuniv}. 

\begin{enumerate} 
\item $\exists x \varphi^{\downarrow}(x) \Leftrightarrow \exists X(C(X) \wedge \varphi(X))$ 
where $\varphi^{\downarrow}(x):=\exists X(C(X) \wedge$ $X(x) \wedge \varphi(X)$
\begin{enumerate} 
\item Let us prove  $\exists X(C(X) \wedge \varphi(X))$ 
under the assumption $\exists x \varphi^{\downarrow}(x)$.
\begin{center}
\includegraphics[scale=1]{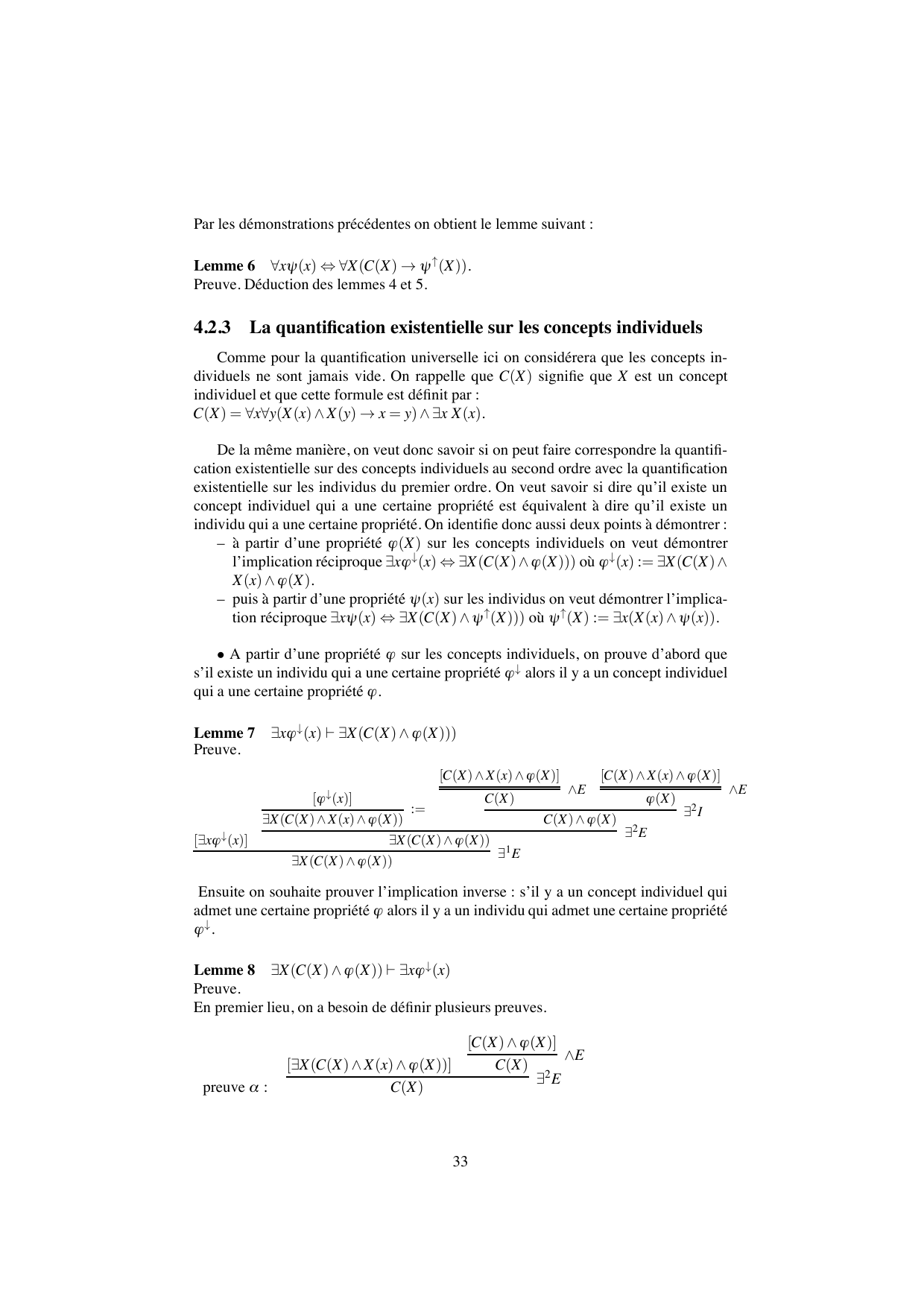} 
\end{center} 
\item Now let us prove  $\exists x \varphi^{\downarrow}(x)$ under the assumption  $\exists X(C(X) \wedge \varphi(X))$. 
We first need $\alpha,\beta,\gamma$ i.e. the three following proofs : 
\medskip 

%\hspace*{-3em} 
$$ 
\begin{array}{ll}  
\alpha: & \\ 
& \includegraphics[scale=1.2]{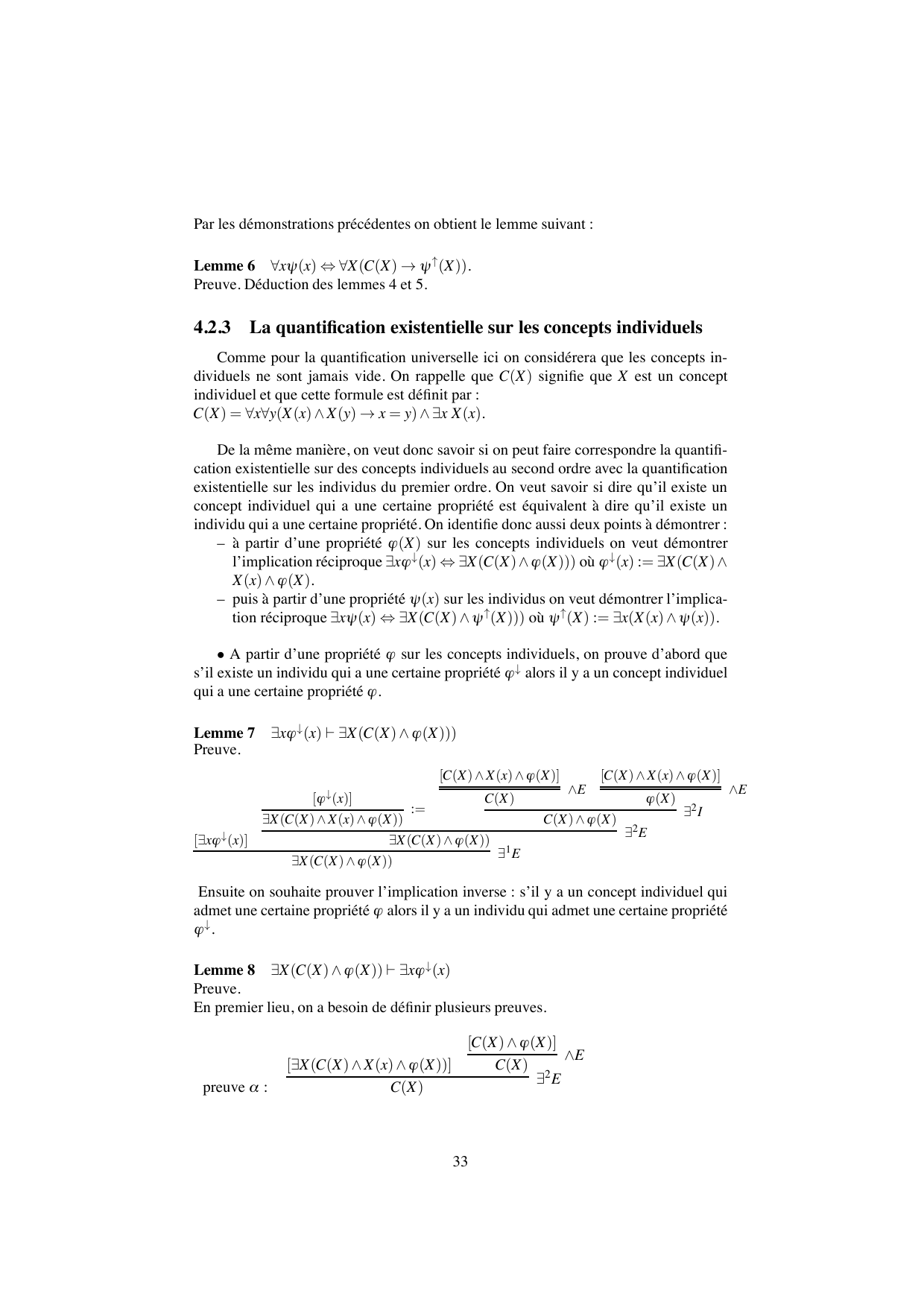}\\ 
\end{array}
$$

%\quad 
$$
\begin{array}{ll} 
\beta:  &  \\ 
& \includegraphics[scale=1.2]{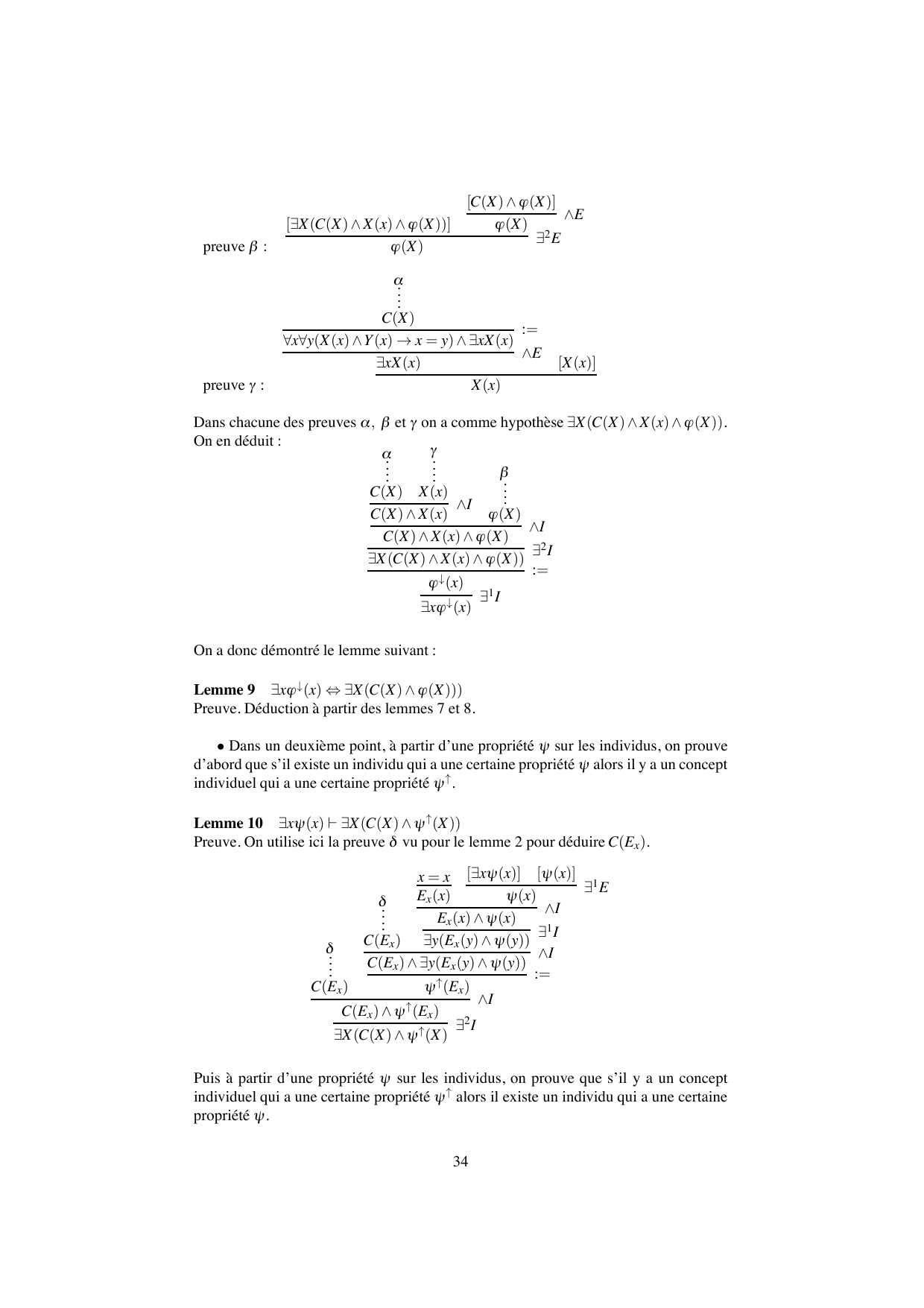}\\
\end{array}
$$

$$
\begin{array}{ll} 
\gamma: & \\ 
& \includegraphics[scale=1.4]{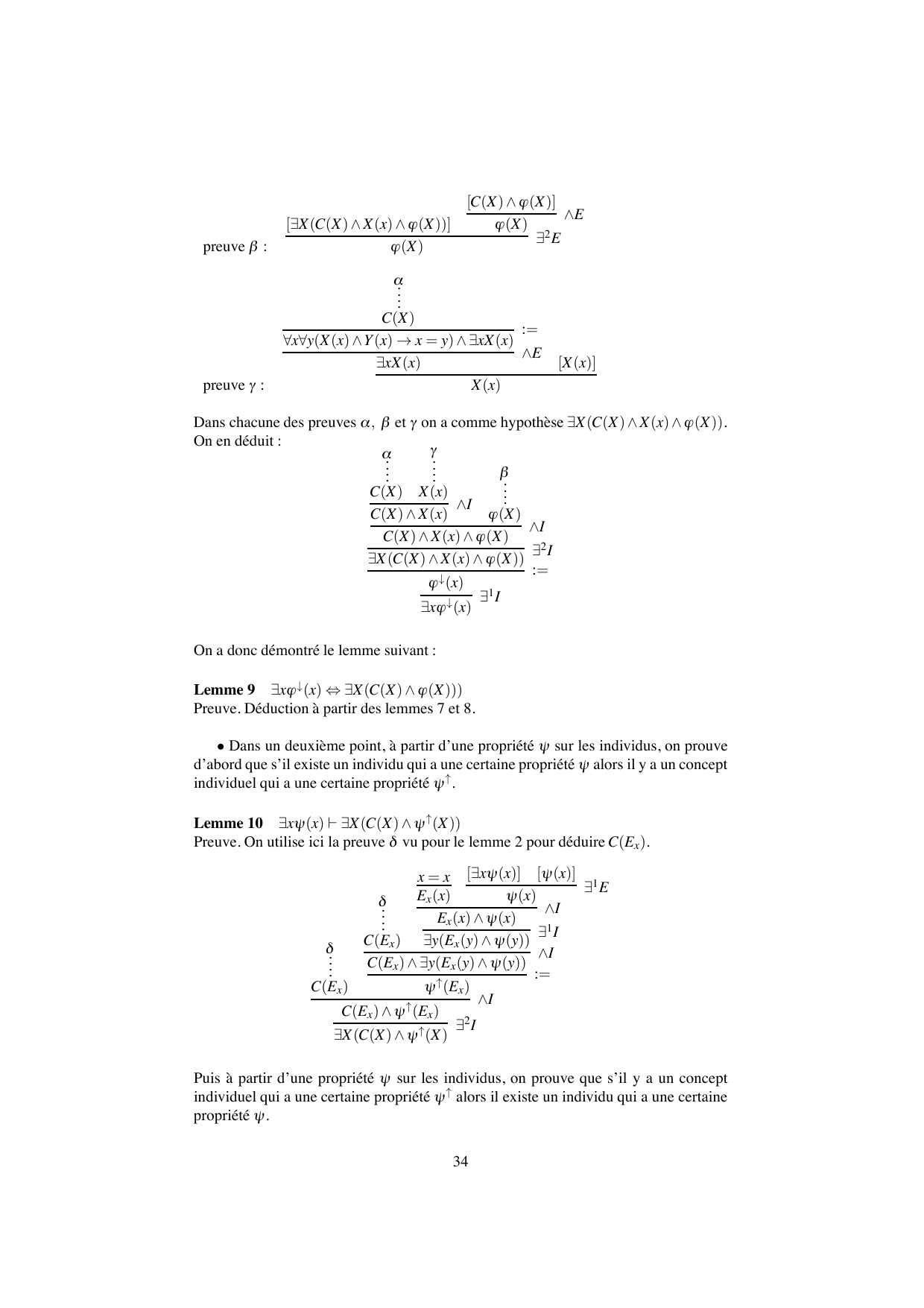}\\
\end{array}$$ 
and the proof we are looking for is: 
\begin{center} 
\includegraphics[scale=1.4]{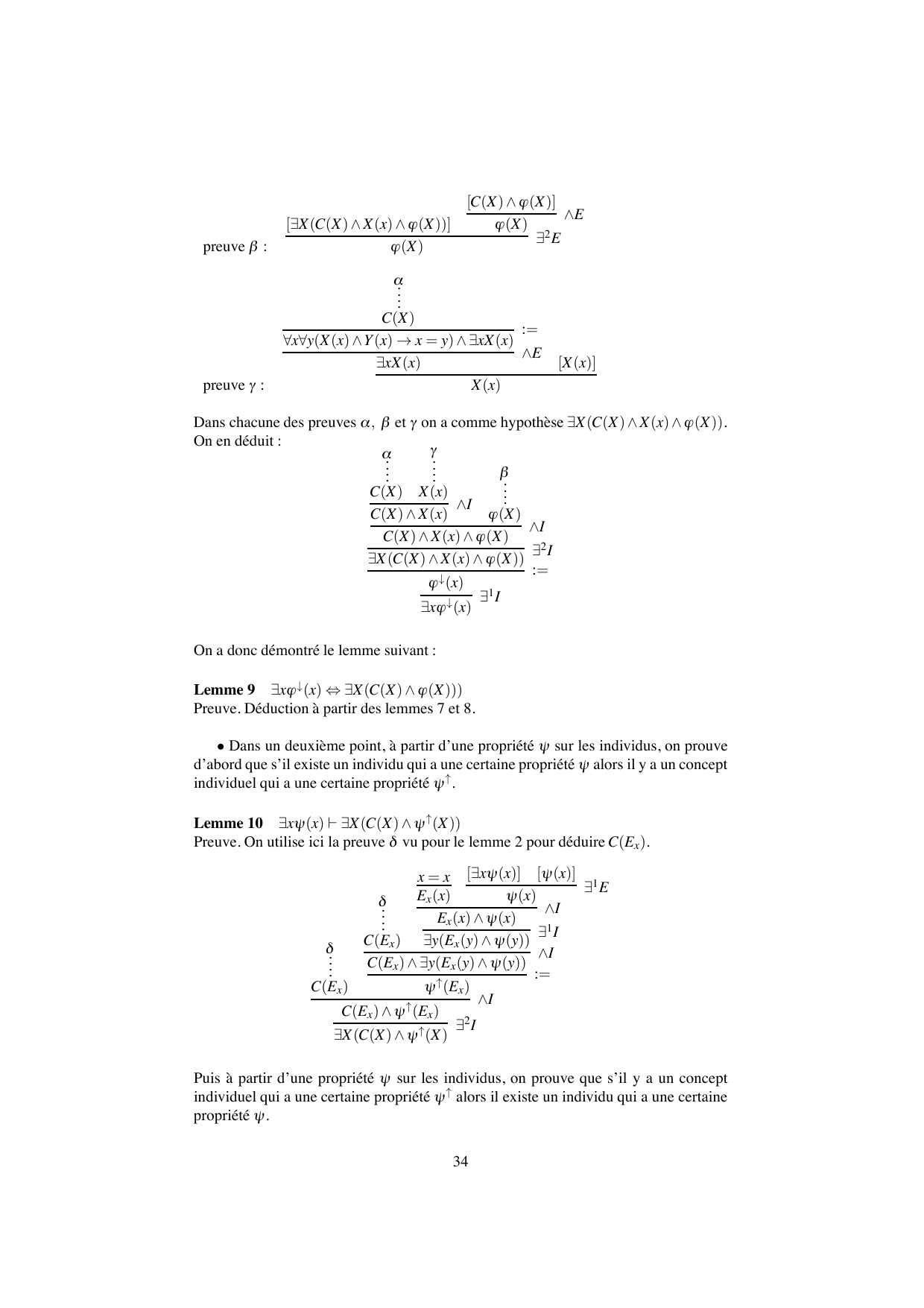}
\end{center} 
\end{enumerate}
\item $\exists x \psi(x) \Leftrightarrow \exists X\left(C(X) \wedge \psi^{\uparrow}(X)\right)$ where  $\psi^{\uparrow}(X):=\exists x(X(x) \wedge \psi(x))$
\begin{enumerate} 
\item Let us prove  $\exists X\left(C(X) \wedge \psi^{\uparrow}(X)\right)$
under the assumption $\exists x \psi(x)$ -- $\delta$ is  the proof of $C(E_x)$ defined in  the proof of proposition \ref{ICQuniv}. 
\begin{center}
\includegraphics[scale=1.4]{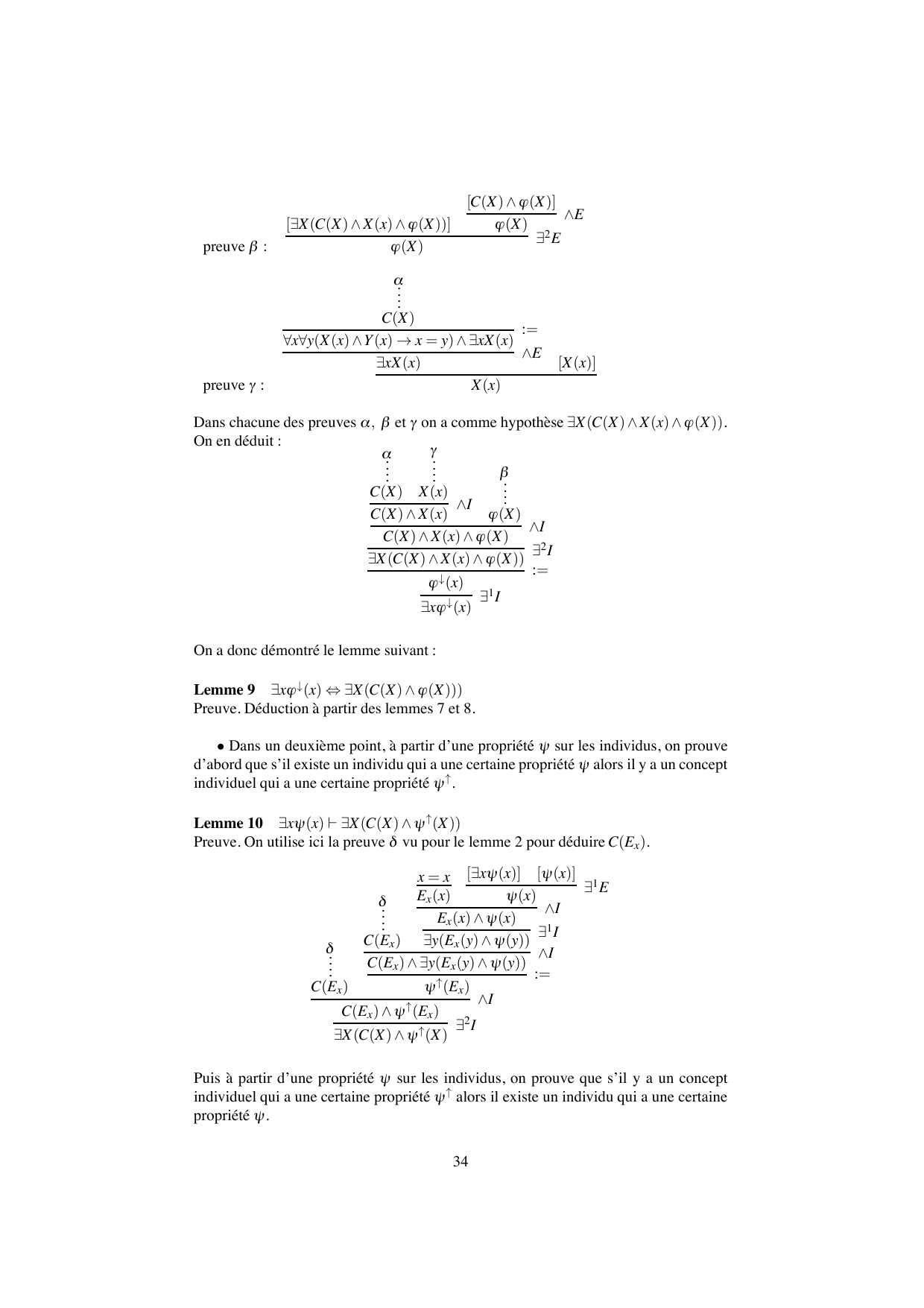} 
\end{center} 
\item 
Let us prove $\exists x \psi(x)$
under the assumption $\exists X\left(C(X) \wedge \psi^{\uparrow}(X)\right)$
\begin{center}
\includegraphics[scale=1.2]{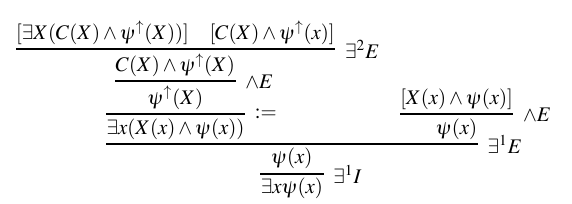} 
\end{center} 
\end{enumerate}
\end{enumerate}
\end{proof}

\subsection{Dealing with possibly empty individual concepts}

When individual concepts are possibly empty the second order formula expressing that $X$ is an individual concept is $C(X)=\forall x \forall y(X x \wedge X y \limp  x=y)$ --- the  $\exists x X(x)$  left out of our initial definition of individual concepts. 

Regarding universal quantification, $\forall X(C(X) \limp  \varphi(X))$ still entails $\forall x \varphi^{\downarrow}(x)$, but  $\forall x \varphi^{\downarrow}(x)$ does not entail  $\forall X(C(X) \limp  \varphi(X))$ anymore. This is logical: when all individual concepts have a property  be they empty or not, all individuals enjoy the corresponding first order property. The converse does not hold: when all individuals enjoy a property, all the non empty concepts enjoy the property, but why should the empty individual concept enjoy this property as well? 

Of course regarding existential quantification, that's the opposite. 
$\exists x \varphi(x)$ entails  $\exists X(C(X) \wedge \varphi(X)))$ but $\exists X(C(X) \wedge \varphi(X))$ does not entail $\exists x \varphi(x)$. When an individual enjoys a property, so does the corresponding individual concept. But when a possibly empty individual concept enjoys a property, it does not entail that an individual enjoys this property, because this individual concept might be an empty individual concept.

So the second order view of usual quantification does not fit in well with possibly empty individual concepts. 

\section
%[A reminder on generalised quantifiers and branching quantifiers]
{A reminder on generalised and branching quantifiers}

This reminder mainly relies on the presentation given by Peters and Westerst{\aa}hl \cite{PetersWesterstahl2006quantifiers}, oen may also consult the survey \cite{sep-generalized-quantifiers}. 
Generalised quantifiers, initially introduced by Mostowski \cite{Mostowski1957} and further developed by Lindström\cite{Lindstrom1966} are a generalisation of standard universal and existential quantification. Roughly speaking, generalised quantifiers view quantifiers as relations over relations (or tuples of relations) --- those relations are relations on the domain (a.k.a universe, model) of an interpretation: thus, quantifiers are viewed as second-order concepts. 

\subsection{Generalised quantifiers}

Given $k$ integers  $n_1,..., n_k$, a quantifier $\mathcal{Q}$ of type $\langle n_1,..., n_k \rangle$ can be viewed as a function endowing each domain $M$ with a $k$-ary relation $Q_M$ such that if $(R_1,..., R_k) \in Q_M$, then for all $i$ in  between $1$ and $k$, $R_i$ is a $n_i$-ary relation over elements of $M$. Let us give some examples.

The usual quantifiers $\forall$ and $\exists$ can be then expressed as simple type $\langle 1 \rangle$ quantifiers :
$\exists_M = \{A \subseteq M,\ A \neq \emptyset\}$ and 
$\forall_M = \{A \subseteq M,\ A = M\}$. 
Thus, the existential quantifier is in every domain the unary relation which holds true for all non-empty predicates, and the universal quantifier is the relation which holds true only for the whole domain $M$.

Some generalised quantifiers have an equivalent formulation in usual first-order logic, such as the quantifier ``at least two":
$(\exists_{\geq 2})_M = \{A \subseteq M,\ |A| \geq 2\}$
which can be expressed with the following first-order formula:
$\exists x \exists y x\neq y$
However this is not always the case. Take for example the type $\langle 1,1 \rangle$ quantifier expressing that most $A$ are $B$:
$\textbf{Most}_M(A, B) \Longleftrightarrow |A \cap B| > |A - B|$
which notably cannot be expressed as a first-order formula.

It is worth noting that universal and existential quantification on individual concepts as we presented in Section \ref{individualconcepts} can also be formulated in terms of generalised quantifiers. To say that all (resp. some) individual concepts satisfy a property $\varphi$ is  in fact a second-order statement about the predicates $C$ (``to be an individual concept") and $\varphi$. Hence the second order view of first order quantification that we presented in the previous section can be expressed as generalised quantifiers $\forall_C$ and $\exists_C$ with type $\langle 1,1 \rangle$:
$$
\quad
\forall_C(C, \varphi) \Longleftrightarrow C \subset \varphi
\quad \quad 
\exists_C(C, \varphi) \Longleftrightarrow C \cap \varphi \neq \emptyset 
$$
\subsection{Branching quantifiers}

%IL FAUT DAVANTAGE DEVELOPPER ET EXPLIQUER + REFERENCES 

Among generalised quantifiers, branching quantifiers are of particular interest, both for logic and linguistics. Initially introduced by Henkin \cite{Henkin1961quant} and much later on studied by Hintikka \cite{HS97} — independently of generalised quantifiers — branching quantification is a generalisation of classical quantification that allows the expression of independence  between some  existentially quantified variable and some previously universally quantified variables. This cannot be expressed within usual quantification because quantifiers are supposed to be linearly ordered. The simplest example of such a non-first-order quantifier is the following  Henkin quantifier where, as the notation suggests, $x'$ only depends on $x$, while $y'$, only depends on $y$. 
 
\centerline{
\begin{tikzpicture}[grow=left, sibling distance=20pt,level distance=2.25cm,
edge from parent path={(\tikzparentnode.west) -- (\tikzchildnode.east)}]
\node {$F(x,y,x',y')$}
child {node {$\forall x \exists x'$}}
child {node {$\forall y \exists y'$}};
\end{tikzpicture}
}

As proven by Ehrenfeucht (in Henkin \cite{Henkin1961quant}), this construction has no first-order equivalent. Notably, it cannot be expressed with a linear quantifier prefix such as $\forall x \exists x' \forall y \exists y'$ or $\forall x \forall y \exists x' \exists y'$, since there would be unwanted dependencies between $x'$ and $y$, and $y'$ and $x$.

Although not initially introduced as such, branching quantifiers can in fact be seen as specific generalised quantifiers. Indeed, the (in)dependencies between variables can be expressed using Skolem functions, e.g. the Henkin quantifier above can be written as follows: 
$$\exists f \exists g \forall x \forall y\ F(x, f(x), y, g(y))$$
This in turn allows us to translate it as a generalised quantifier, for example here as the type $\langle 4 \rangle$ quantifier:\quad 
$$H_M = \{ R \subseteq M^4\ |\ \exists f \exists g \forall x \forall y\ (x,f(x),y,g(y)) \in R\}$$

\section{Second-order proof rules for branching quantifiers}

%IL FAUT DAVANTAGE DEVELOPPER ET EXPLIQUER + REFERENCES 

\paragraph{Branching quantifiers as second-order formulae}

In this part, we focus on the expression of branching quantifiers as second-order constructions. Such quantifiers can be quite complex, so we limit ourselves to studying the simplest branching quantifier. Our main object of study is the typical branching constructions found in natural language in the so-called Hintikka sentences, such as :
\medskip 

\noindent
    \textbf{(H)}\quad  \textit{A member of each team and a member of each board of directors know each other}
\medskip 

The branching-quantifier reading of the above English sentence can be formulated within second-order logic:\footnote{There is also a first-order reading of this sentence, which the 'each other' (perhaps) makes less perceptible, and which can be expressed within first-order logic: $[\forall x \exists x' \forall y \exists y'\quad T(x) \wedge B(y) \limp  M(x, x') \wedge M(y, y') \wedge K(x', y')] \land 
[\forall y \exists y'\forall x \exists x' \quad T(x) \wedge B(y) \limp  M(x, x') \wedge M(y, y') \wedge K(x', y')]$. According to Szymanik \cite{Szymanik2016}, in two-thirds of cases, the first first-order reading is preferred to the branching quantifier reading.
}
\begin{center}
\begin{tikzpicture}[grow=left, sibling distance=20pt,level distance=5cm,
edge from parent path={(\tikzparentnode.west) -- (\tikzchildnode.east)}]
\node {$T(x) \wedge B(y) \limp  M(x, x') \wedge M(y, y') \wedge K(x', y')$}
child {node {$\forall x \exists x'$}}
child {node {$\forall y \exists y'$}};
\end{tikzpicture}
\end{center}

As mentioned earlier, this formula can be expressed as a second order formula with existential quantification over functions:

$$(\Hfun): \exists f \exists g \forall x \forall y\ T(x) \wedge B(y) \limp  K(f(x), g(y))$$

\paragraph{Natural deduction rules with binary predicates}

This formulation of the Henkin quantifier as a second-order formula with quantification over functions is however not fully satisfactory, for it actually provides a stronger effect than needed: defining $f$ and $g$ as functions implies the unicity of $f(x)$ and $g(y)$ for any given $x$ and $y$, while the original formula with a branching quantifier only requires that there exists one (possibly more) $x'$ for each $x$ and $y'$ for each $y$. Thus $f$ and $g$ need not be functions, but only need be non-empty binary predicates --- as always with Skolem functions, the choice of $f(x)$ for each $x$ is part of the interpretation of the function symbol.

Therefore, we propose another second-order representation of this reading of the sentence using quantification over predicates instead of quantification over functions:

\begin{equation*}
    \begin{split}
    (\Hpred): &\exists F \exists G [\forall x \exists x' T(x)\limp F(x,x')] \wedge [\forall y \exists y' B(y)\limp G(y,y')]\\ &\wedge [\forall x \forall x' \forall y \forall y' T(x) \wedge B(y) \wedge F(x, x') \wedge G(y, y') \limp  K(x', y')]
 \end{split}
\end{equation*}

This formula simply replaces each of the two functions $f$ and $g$ of $(Hfun)$ above with the binary predicates $F$ and $G$. These two predicates act intuitively as relations that select suitable $x'$ and $y'$, since all we need to ensure is that whenever $x'$ is a valid representative for $x$ (and $y'$ for $y$), then $x'$ and $y'$ know each other. The binary predicates $F$ and $G$ are required to relate each possible value of their first argument which ought to be in the proper set/predicate ($T$ for $x$, $B$ for $y$) to at least one value of their second argument.\footnote{Similarly, we could ask that $x'$ and $y'$ are in the proper set/predicate ($T$ for $x'$, $B$ for $y'$) but it is less important. Thus we do not add this precision, which is not needed --- unlike the restriction to $x$ and $y$ --- and makes the formulae, which are already long enough, considerably longer:     $\Hpred': \exists F \exists G [\forall x \exists x' T(x)\limp F(x,x') \land T(x')] \wedge [\forall y \exists y' B(y)\limp G(y,y')\land B(y')] \wedge [\forall x \forall x' \forall y \forall y' T(x) \wedge T(x') \wedge B(y) \wedge B(y') \wedge F(x, x') \wedge G(y, y') \limp  K(x', y')]$} There is nevertheless a difference between using function as in $(\Hfun)$ and $(\Hpred)$: in $(\Hpred)$ there may well exist several values of $x'$ (resp. $y'$) such that $F(x,x')$ (resp. $G(y,y')$), there is no need to chose one as opposed to  $(\Hfun)$, where one is explicitly chosen.

The natural deduction rules for second-order logic from section \ref{proofrules} give us the introduction and elimination rules for the branching Henkin quantifier. For the sake of readability, let us write from here on:
$$
\Phi(F,G,x,x',y,y') = T(x) \wedge B(y) \wedge F(x, x') \wedge G(y, y') \rightarrow K(x', y')
$$
and 
\begin{equation*}
    \begin{split}
\Psi(F,G) =\ &[\forall x \exists x' T(x)\limp F(x,x')] \wedge [\forall y \exists y' B(y)\limp G(y,y')]\\ &\wedge [\forall x, \forall x' \forall y \forall y' \Phi(F,G,x,x',y,y')] 
 \end{split}
\end{equation*}

The introduction rule is quite straightforward:
$$
\begin{prooftree}
    \varphi(x, t)
    \qquad
    \psi(y, u)
    \qquad
    \Phi(\varphi,\psi,x,x',y,y')
    \justifies
    \exists F \exists G [\forall x \exists x' T(x)\limp F(x,x')] \wedge [\forall y \exists y' B(y)\limp G(y,y')] \wedge [\forall x, \forall x' \forall y \forall y' \Phi(F,G,x,x',y,y')] 
    \using H_I
\end{prooftree}
$$
where $\varphi$ is a formula with free variables exactly $x, x'$, $\psi$ a formula with free variables exactly $y, y'$, and $t,u$ are terms.

The elimination rule, however, is more complicated due to the use of two second-order eliminations of $\exists^2$:
$$
\begin{prooftree}
            \exists F \exists G \Psi(F,G)
                \quad
        \begin{prooftree}
            [\exists G \Psi(A,G)]^{(2)}
                \quad
                \hspace*{0em}
            \[
            [\Psi(A,B)]^{(1)}
            \leadsto
            \varphi
            \]
            \quad
            \hspace*{0em}
            \justifies
            \varphi
            \using \exists^{2}E^{(1)}
        \end{prooftree}
        \justifies
        \varphi
        \using \exists^{2}E^{(2)}
\end{prooftree}
$$
in which $A$ and $B$ must not appear free in $\varphi$.

Let us write as above  $H(x,x',y,y')$ the branching quantifier that binds the two universally quantified variables $x,y$ and the  two existentially quantified variables $x',y'$ e.g. the example above may be written 
$H(x,x',y,y')\ \Phi(F,G,x,x',y,y')$. 

Let $\mathcal{H}$ be the set of closed formulae  that can be written with this quantifier and the two usual first order quantifiers $\exists x$ and $\forall y$. 

In the near future, we intend to determine whether our direct rules can be used to derive all the formulae in $\mathcal{H}$ that can be derived with the usual rules for second and first quantifiers. It seems plausible to us, because cut-elimination holds for second order logic. \cite{HetzlLeitschWelller2011apal}   However, we are not yet fully certain, and this is one aspect that we intend to clarify in the near future.

\section{Conclusion}

This ongoing  work deals with the proof rules  of extensions of first order quantification viewed as second order logic constructions which do not need the full expressive power (and complexity) of second order logic.  

We first described usual first order quantifiers within the second order logic as quantification over individual concepts and obtained that this description works provided individual concepts are asked to be non empty  --- unsurprisingly it does not work without this restriction. 

Thereafter we focused on the non first order reading of the simplest branching quantifier that one finds in sentences like:
\textit{``A member of each team and a member of each board of directors know each other"}. 
Regarding this (reading of this) quantifier,  we proposed a definition of it within second order logic and provided direct introduction and elimination rules for this complex quantifier which is often only described in model-theoretic terms. 
Later on we shall prove that the complete set of second order rules does not derive more sentences with those connectives than our direct rules. 

By way of conclusion, let us mention a prospect that has opened up to us recently.  The epsilon calculus,\cite{epsilonURL,CPR2017jla}  may express readings  that are close to branching quantifier readings,  with epsilon formulas that have no equivalent in first or higher order logic. Indeed, the subnectors epsilon and tau express quantification with some scope ambiguities (under-specification). However, it is presently too early to say anything definite about this question. 

\medskip

\noindent\textit{\textbf{Last minute remark:} While we were revising
this article we realised that Matthias Baaz and Anela Lolic
\cite{BL2021} proposed an analytic calculus for Henkin quantifiers. In their paper, Henkin quantifiers are viewed as the existence of functions  (i.e. as a particular form of second order quantification), and they proved cut-elimination for this calculus.  It is too late for this paper of ours to examine how their account of Henkin quantifiers differs from ours, but this will be
be our first aim in continuing our work. A first remark is that $(1): \exists F \forall u \exists v F(u,v)$ seems slightly weaker than $(2): \exists f F(u,f(u))$: in order to derive $(2)$ from $1$, it is necessary to utilise some form of the axiom of choice. Another difference, a very small one actually, is that our rules are natural deduction rules (introduction/elimination rules) and not sequent calculus: sequent-calculus right-rules correspond to natural-deduction introduction-rules but sequent-calculus  left-rules do not correspond to natural-deduction elimination-rules.}

\begin{acknowledgments}
We would like to express our gratitude to the anonymous reviewers of ARQNL2024 for their valuable feedback, which has been instrumental in helping us to enhance this article. Furthermore, we would like to extend our gratitude to the programme chairs who accepted some late modifications.
\end{acknowledgments}

\newpage 
\bibliography{bigbiblio}

\end{document}